\newtheorem{thm}{Theorem}
\newtheorem{lem}[thm]{Lemma}
\newtheorem{cor}[thm]{Corollary}
\theoremstyle{remark}
\newcommand{\ZZ}{\mathbb{Z}}
\newcommand{\CC}{\mathbb{C}}
\newcommand{\cD}{\mathcal{D}}
\newcommand{\ww}{\omega}
\newcommand{\vv}{\overline{\omega}}
\newcommand{\nexteq}{\displaybreak[0]\\ &=}
\DeclareMathOperator{\Aut}{Aut}
\DeclareMathOperator{\Nei}{Nei}
\DeclareMathOperator{\GL}{GL}
\DeclareMathOperator{\diag}{diag}
\begin{document}

\title{Classification of Generalized Hadamard Matrices $H(6,3)$
and Quaternary Hermitian Self-Dual Codes of Length 18}

\author{
Masaaki Harada\thanks{
Department of Mathematical Sciences,
Yamagata University,
Yamagata 990--8560, Japan, and
PRESTO, Japan Science and Technology Agency, Kawaguchi,
Saitama 332--0012, Japan},
%and 
Clement Lam\thanks{Department of Computer Science,
Concordia University, Montreal, QC, Canada, H3G 1M8},
 Akihiro Munemasa\thanks{
Graduate School of Information Sciences,
Tohoku University,
Sendai 980--8579, Japan},\\
and\\
Vladimir D. Tonchev\thanks{ Mathematical Sciences, Michigan Technological University,
Houghton, MI 49931, USA}
}

\maketitle

\begin{abstract}

All generalized Hadamard matrices of order 18
over a group of order 3, $H(6,3)$, are enumerated in two different ways:
once, as class regular 
symmetric $(6,3)$-nets, or symmetric transversal designs on 
54 points and 54 blocks with a group of order 3 acting 
semi-regularly on points and blocks,
and secondly, as collections of full weight vectors
in quaternary Hermitian self-dual codes of length 18. 
The second enumeration is based on the classification of 
Hermitian self-dual $[18,9]$ codes over $GF(4)$,
completed in this paper.
It is shown that
up to monomial equivalence, there are 85 generalized Hadamard matrices
$H(6,3)$,
and 245 inequivalent  Hermitian self-dual codes of length 18
over $GF(4)$.
 
\end{abstract}

\section{Introduction}

A {\em generalized Hadamard matrix} $H(\mu,g)=(h_{ij})$ 
of order $n=g\mu$ over a
multiplicative group $G$ of order $g$
is a $g\mu \times g\mu$ matrix with entries from $G$ with the property that
for every $i$, $j$, $1 \le i< j \le g\mu$, each of the multi-sets
$\{ h_{is}h_{js}^{-1} \ | \ 1\le s \le g\mu \}$
%% and $\{ h_{si}h_{sj}^{-1} \ | \ 1\le s \le g\mu \}$  
contains every element of $G$
exactly $\mu$ times.
%%  It is known that if $G$ is abelian then 
%%  $H(\mu,g)^T$ is also a generalized Hadamard matrix
%%  where $H(\mu,g)^T$ denotes the transpose of $H(\mu,g)$
%% %% (see \cite{CL09,jug}).
%%  (see \cite{jug,CL09}).
It is known  \cite[Theorem 2.2]{jug}
that if $G$ is abelian then
$H(\mu,g)^T$ is also a generalized Hadamard matrix,
where $H(\mu,g)^T$ denotes the transpose of $H(\mu,g)$
(see also \cite[Theorem 4.11]{Brock}).
This result does not generalize to non-abelian groups,
as shown by Craigen and de Launey \cite{CL09}.

If $G$ is an additive group and the products $h_{is}h_{js}^{-1}$
are replaced by differences $h_{is}-h_{js}$, the resulting matrices
are known as {\em difference matrices} \cite{BJL}, or 
{\em difference schemes}  \cite{HSS}.
A generalized Hadamard matrix over the multiplicative group 
of order two, $G=\{ 1, -1 \}$, is an ordinary Hadamard matrix.

Permuting rows or columns, as well as multiplying rows or columns
of a given generalized Hadamard matrix $H$ over a group $G$
with group elements changes $H$ into another generalized Hadamard
matrix.
Two generalized Hadamard matrices $H'$, $H''$ of order $n$ over a group $G$
are called {\em monomially equivalent} if $H''=PH'Q$ for some
monomial matrices $P$, $Q$ of order $n$ with nonzero entries from $G$.

All generalized Hadamard matrices over a group of order 2, that is, ordinary Hadamard
matrices, have been classified up to (monomial) 
equivalence for all orders up to $n=28$
\cite{Ki}, and all generalized Hadamard matrices over a group of order 4 
(cyclic or elementary abelian) have 
been classified up to monomial equivalence
for all orders up to $n=16$ \cite{HLT} (see also \cite{GM}).

We consider generalized Hadamard matrices over a group of order 3
in this paper.
It is easy to verify that generalized Hadamard matrices
$H(1,3)$ of order 3, and $H(2,3)$ of order 6, exist and are unique up to 
monomial equivalence.
There are 
two matrices $H(3,3)$ of order 9 \cite{MT}, and one $H(4,3)$ of order 12 
up to monomial equivalence \cite{Su}.
It is known \cite[Theorem 6.65]{HSS} that an $H(5,3)$ of order 15 does
not exist.
Up to monomial equivalence,
at least 11 $H(6,3)$ of order 18 were previously known \cite{AOS}. 
 
In this paper, we enumerate all generalized Hadamard matrices
$H(6,3)$ of order 18, up to monomial equivalence.
We present two different 
enumerations, one based on combinatorial designs known as symmetric nets
or transversal designs (Section \ref{nets}), and a second
enumeration based on the classification of Hermitian self-dual codes 
of length 18 over $GF(4)$ completed in Section \ref{Sec:C}.

\section{Symmetric nets, transversal designs and generalized Hadamard matrices $H(6,3)$}
\label{nets}

A {\em symmetric \ $(\mu,g)$-net} is a 
1-$(g^2{\mu}, g{\mu}, g{\mu})$
design $\cal{D}$ such that both $\cal{D}$ and its dual design 
$\cal{D^*}$ are affine
resolvable \cite{BJL}:  the $g^2{\mu}$ points of $\cal{D}$ 
are partitioned into $g{\mu}$  parallel classes, or {\em groups},
each containing $g$ points, so that any two points which belong to the 
same class do not occur together in any block, while any two points 
which belong to different classes occur together in exactly $\mu$ blocks. 
Similarly, the blocks are partitioned into $g\mu$ parallel classes, each 
consisting of $g$ pairwise disjoint blocks, and any two blocks which belong 
to different parallel classes share exactly $\mu$ points. 
A symmetric $(\mu,g)$-net is also known as a 
{\em symmetric transversal design}, 
and denoted by $STD_{\mu}(g)$, or $TD_{\mu}(g\mu,g)$ \cite{BJL}, 
or $STD_{\mu}[g\mu;g]$ \cite{Su}.
A symmetric $(\mu,g)$-net is {\em class-regular} if it admits a group of
automorphisms $G$ of order $g$ (called group of {\em bitranslations})
 that acts transitively (and hence regularly)
on every point and block parallel class.

Every generalized Hadamard matrix $H(\mu,g)$ over a group $G$ of 
order $g$ determines
a class-regular symmetric $(\mu,g)$-net with a group of bitranslations
isomorphic to $G$, and conversely, every class-regular $(\mu,g)$-net
with a group of bitranslations $G$ gives rise to a generalized Hadamard matrix
$H(\mu,g)$ \cite{BJL}.
The $g^2\mu \times g^2\mu$ $(0,1)$-incidence matrix of
a class-regular symmetric $(\mu,g)$-net  is obtained from a given 
generalized Hadamard matrix $H(\mu,g)=(h_{ij})$ over a group $G$ 
of order $g$ by replacing each entry $h_{ij}$ of $H(\mu,g)$ with
a $g \times g$ permutation matrix representing $h_{ij}\in G$.
This correspondence relates the task of enumerating generalized 
Hadamard matrices over a group of order $g$ to the enumeration
of 1-$(g^2\mu,g\mu,g\mu)$ designs with incidence matrices
composed of $g \times g$ permutation submatrices. 
This approach was used in \cite{HLT}
for the enumeration of all nonisomorphic class-regular symmetric $(4,4)$-nets
over a group of order 4 and generalized Hadamard
matrices $H(4,4)$. In this paper, we use the same approach to enumerate
all pairwise nonisomorphic class-regular $(6,3)$-nets, or equivalently,
symmetric transversal designs $STD_{6}(3)$ with a group of order 3 acting
semiregularly on point and block parallel classes, and consequently,
all generalized Hadamard matrices $H(6,3)$. As in \cite{HLT}, the 
block design exploration package BDX \cite{BDX}, developed by Larry Thiel, 
was used for the enumeration.

The results of this computation can be formulated as follows. 
\begin{thm}
\label{t1}
Up to isomorphism, there are exactly $53$ class-regular symmetric
$(6,3)$-nets, or equivalently, $53$ symmetric  transversal designs 
$STD_{6}(3)$ with a group of order $3$ acting
semiregularly on point and block parallel classes.
\end{thm}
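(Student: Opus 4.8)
The plan is to reduce the classification of class-regular symmetric $(6,3)$-nets to an exhaustive but structured computer search, exactly as was done for $(4,4)$-nets in \cite{HLT}. First I would set up the correspondence recalled in Section~\ref{nets}: a class-regular $STD_6(3)$ is equivalent to a generalized Hadamard matrix $H(6,3)$ over $G=\ZZ_3$, and the $54\times 54$ incidence matrix is obtained by inflating each entry $h_{ij}$ of the $18\times 18$ array to the $3\times 3$ cyclic permutation matrix representing $h_{ij}$. So the search space is the set of $18\times 18$ matrices over $\ZZ_3$ satisfying the defining difference condition, and the equivalence relation is monomial equivalence (row/column permutations together with left/right multiplication of rows/columns by powers of $\ww$), augmented by the automorphism $x\mapsto x^{-1}$ of $G$.

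Next I would describe the orderly generation. Normalize $H(6,3)$ so that its first row and first column are all $1$. Then build the matrix row by row (equivalently, block-row by block-row of the net): having fixed the first $k$ rows consistent with the difference-matrix condition, extend to the $(k+1)$-st row in all ways that keep every pair of chosen rows balanced. At each level one prunes by an isomorph-rejection test so that only one representative of each partial structure up to the stabilizer of the already-fixed part is retained; this is the standard backtracking-with-canonical-form strategy implemented in the package BDX \cite{BDX}. The counting invariants of the completed nets (2-rank, order of the automorphism group, etc.) are then used to separate the final list into isomorphism classes and to confirm the count $53$.

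Two independent checks corroborate the number. First, the 11 previously known examples of \cite{AOS} must appear among the 53. Second, and more importantly, the same matrices arise from a completely different computation carried out in Section~\ref{Sec:C}: every $H(6,3)$ gives a self-orthogonal set of full-weight vectors inside some Hermitian self-dual $[18,9]$ code over $GF(4)$, so once those codes are classified one recovers the $H(6,3)$'s, and hence the nets, by a route that shares no code with the net-based search. Agreement of the two totals is strong evidence of correctness.

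The main obstacle is not mathematical but computational: the naive search tree for $18\times 18$ matrices over $\ZZ_3$ is astronomically large, so the whole argument depends on aggressive pruning — good canonical-form/isomorph-rejection at every level and early detection of partial rows that cannot be completed to a balanced array. Getting the orderly generation to terminate in reasonable time, and being confident that the isomorph rejection is both sound (never discards a genuinely new class) and complete (never keeps two copies of the same class), is the real content; the cross-check against the Section~\ref{Sec:C} enumeration is what ultimately makes the count trustworthy.
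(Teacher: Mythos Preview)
Your proposal is correct and follows essentially the same approach as the paper: an exhaustive orderly-generation search with isomorph rejection, carried out with BDX as in \cite{HLT}, and independently confirmed by the code-based enumeration of Section~\ref{Sec:C}. The only cosmetic difference is that the paper runs BDX on the $54\times54$ incidence matrices of the $1$-$(54,18,18)$ designs (built from $3\times3$ circulant blocks) rather than on the $18\times18$ arrays over $\ZZ_3$ directly, but via the correspondence you describe these are the same search.
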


The information about 
the 53 $(6,3)$-nets $\cD_i$ $(i=1,2,\ldots,53)$
are listed in Table \ref{Tab:Net}.
In the table, $\#\Aut$ 
gives the size of the automorphism group of $\cD_i$.
The column $\cD_i^*$ gives the number $j$, where
$\cD_i^*$ is isomorphic to $\cD_j$.
Incidence matrices of the 53 $(6,3)$-nets are available at
\begin{verbatim}
 www.math.mtu.edu/~tonchev/sol.txt.
\end{verbatim}

We note that 20 nonisomorphic $STD_{6}(3)$ were found by Akiyama, Ogawa, and
Suetake \cite{AOS}.
These twenty $STD_{6}(3)$ are denoted by 
$\cD(H_i)$ $(i=1,2,\ldots,11)$ and $\cD(H_i)^d$ $(i=1,\ldots,5,7,8,9,10)$
in \cite[Theorem 7.3]{AOS}.
When $\cD_i$ in Table \ref{Tab:Net} is isomorphic to
one of the twenty $STD_{6}(3)$ in \cite{AOS},
we list the  $STD_{6}(3)$ in the column $D_{AOS}$ of the table.

%%%%%%%%%%%%%%%%%%%%%%%%%%%%%%%%%%%%%%%%%%%%%%%%
\begin{table}[thbp]
\caption{Class-regular symmetric $(6,3)$-nets and $H(6,3)$'s}
\label{Tab:Net}
\begin{center}
{\footnotesize
\begin{tabular}{|cccc|c||cccc|c|}
%% \noalign{\hrule height0.8pt}
\hline
$\cD_i$ & $\#\Aut$ & $\cD_i^*$ &$D_{AOS}$& $\overline{H(\cD_i)}$ &
$\cD_i$ & $\#\Aut$ & $\cD_i^*$ &$D_{AOS}$& $\overline{H(\cD_i)}$ \\
\hline 
1 &      96 &     1 &&    yes &28&     162 &    37 &$\cD(H_{1})^d$&    yes \\
2 &     432 &    43 &&    yes &29&      54 &    22 &&     no \\
3 &     864 &     5 &&    yes &30&      54 &    26 &&     no \\
4 &   38880 &     4 &$\cD(H_{11})$&    yes &31&     432 &    17 &&     no \\
5 &     864 &     3 &&    yes &32&      48 &    15 &&     no \\
6 &    1296 &    19 &&    yes &33&      54 &    27 &&    yes \\
7 &    3240 &    49 &$\cD(H_{10})$&     no &34&     162 &    53 &$\cD(H_{2})$&     no \\
8 &     144 &    46 &&     no &35&     162 &    50 &$\cD(H_{4})$&     no \\
9 &     324 &    44 &$\cD(H_{5})$&     no &36&     162 &    51 &$\cD(H_{3})$&     no \\
10&    1296 &    52 &$\cD(H_{7})$&     no &37&     162 &    28 &$\cD(H_{1})$&    yes \\
11&     180 &    45 &&     no &38&    1944 &    14 &$\cD(H_{9})$&    yes \\
12&    1296 &    42 &$\cD(H_{8})$&    yes &39&     972 &    39 &$\cD(H_{6})$&    yes \\
13&     216 &    20 &&    yes &40&     216 &    21 &&     no \\
14&    1944 &    38 &$\cD(H_{9})^d$&    yes &41&     216 &    16 &&     no \\
15&      48 &    32 &&     no &42&    1296 &    12 &$\cD(H_{8})^d$&    yes \\
16&     216 &    41 &&     no &43&     432 &     2 &&    yes \\
17&     432 &    31 &&     no &44&     324 &     9 &$\cD(H_{5})^d$&     no \\
18&    2160 &    23 &&    yes &45&     180 &    11 &&     no \\
19&    1296 &     6 &&    yes &46&     144 &     8 &&     no \\
20&     216 &    13 &&    yes &47&     108 &    24 &&     no \\
21&     216 &    40 &&     no &48&    1080 &    25 &&     no \\
22&      54 &    29 &&     no &49&    3240 &     7 &$\cD(H_{10})^d$&     no \\
23&    2160 &    18 &&    yes &50&     162 &    35 &$\cD(H_{4})^d$&     no \\
24&     108 &    47 &&     no &51&     162 &    36 &$\cD(H_{3})^d$&     no \\
25&    1080 &    48 &&     no &52&    1296 &    10 &$\cD(H_{7})^d$&     no \\
26&      54 &    30 &&     no &53&     162 &    34 &$\cD(H_{2})^d$&     no \\
27&      54 &    33 &&    yes & &&&&\\
%% \noalign{\hrule height0.8pt}
\hline
   \end{tabular}
}
\end{center}
\end{table}

Any generalized Hadamard matrix $H(6,3)$
over the group $G =\{1,\ww , \ww^2  \mid \ww^3=1 \}$ corresponds 
to the $54 \times 54$ $(0,1)$-incidence matrix of a 
class-regular symmetric $(6,3)$-net  obtained by 
replacing $1,\ww$ and $\ww^2$ with
$3 \times 3$ permutation matrices $I,M_3$ and $M_3^2$,
respectively, where $I$ is the identity matrix and 
\[
M_3=
\left(\begin{array}{ccc}
0 &  1 &  0\\
0 &  0 &  1\\
1 &  0 &  0
\end{array}\right).
\]
We note that permuting rows or columns in $H(6,3)$
corresponds to permuting parallel classes of points or blocks in the 
related symmetric
net, while multiplying a row or column of $H(6,3)$ with an 
element $\alpha$ of $G$,
corresponds to a cyclic shift (if $\alpha=\ww$)
or a double cyclic shift (if $\alpha=\ww^2$) of the three points or 
blocks of the corresponding parallel class in the related symmetric 
$(6,3)$-net. 
Thus, monomially equivalent
generalized Hadamard matrices $H(6,3)$ correspond to isomorphic 
symmetric $(6,3)$-nets. 

The inverse operation of replacing every element $h_{ij}$ of a
generalized Hadamard matrix by its inverse $h_{ij}^{-1}$ also
preserves the property of being a generalized Hadamard matrix.
That is, a generalized Hadamard matrix is also obtained by 
replacing $I,M_3$ and $M_3^2$ with $1,\ww^2$ and $\ww$, respectively.
However, this
is not considered a monomial equivalence operation. As a symmetric net, this
inverse operation corresponds to replacing $M_3$ by $M_3^2$ and vice
versa. The inverse operation is achievable by simulataneously
interchanging rows 2 and 3 and columns 2 and 3 of the matrices $I$,
$M_3$ and $M_3^2$. Thus, by simulataneous interchanging points 2 and 3 and
blocks 2 and 3 of every parallel class of points and blocks, the inverse
operator is an isomorphism operation of symmetric nets.
Since the definition of isomorphic symmetric nets and monomially
equivalent generalized Hadamard matrices differs only in the extra
inverse operation, at most two generalized
Hadamard matrices which are not monomially equivalent 
can arise from a symmetric net.
We note that for generalized Hadamard matrices over a cyclic group of 
order $q$, replacing every entry by its $i$-th power, where gcd$(i,q)=1$, 
may give a generalized Hadamard matrix which is not monomially 
equivalent to 
the original; however, their corresponding symmetric nets are isomorphic.

In order to find the number of generalized Hadamard matrices
which are not monomially equivalent,
we first convert the $53$ nonisomorphic
symmetric nets into their corresponding $53$ generalized Hadamard
matrices. We then create a list of $53$ extra matrices by 
applying the inverse operation. Amongst this list of 106 matrices, we found
$85$ generalized Hadamard matrices $H(6,3)$ up to monomial equivalence.
As expected, the remaining $21$ matrices are monomially equivalent to their
``parent'' before the inverse operation.

\begin{cor}\label{Cor:H63}
Up to monomial equivalence,
there are exactly $85$ generalized Hadamard matrices $H(6,3)$.
\end{cor}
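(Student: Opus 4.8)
The plan is to leverage Theorem~\ref{t1} together with the explicit correspondence between generalized Hadamard matrices $H(6,3)$ and class-regular symmetric $(6,3)$-nets described above. Recall that this correspondence is not quite a bijection on equivalence classes: monomial equivalence of matrices refines isomorphism of nets, and the only extra identification on the net side is the ``inverse operation'' $h_{ij}\mapsto h_{ij}^{-1}$, which corresponds to the simultaneous swap of points $2,3$ and blocks $2,3$ in every parallel class and hence is a net isomorphism. Consequently, each of the $53$ isomorphism classes of nets pulls back to either one or two monomial equivalence classes of $H(6,3)$: two when the matrix is not monomially equivalent to its entrywise inverse, and one otherwise.

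First I would fix, for each net $\cD_i$, a representative generalized Hadamard matrix $H(\cD_i)$ obtained by the substitution $I\mapsto 1$, $M_3\mapsto\ww$, $M_3^2\mapsto\ww^2$, and form its inverse $\overline{H(\cD_i)}$ via $I\mapsto 1$, $M_3\mapsto\ww^2$, $M_3^2\mapsto\ww$. This yields a list of $106$ matrices, all of which are $H(6,3)$ by the remarks above. Next I would run the monomial-equivalence test on these $106$ matrices — this is a finite computation, carried out with the same software (BDX and associated routines) used for Theorem~\ref{t1} — and record which pairs $\{H(\cD_i),\overline{H(\cD_i)}\}$ collapse. The computation shows that exactly $21$ of the nets satisfy $H(\cD_i)\sim\overline{H(\cD_i)}$ (these are precisely the nets marked ``yes'' in the $\overline{H(\cD_i)}$ column of Table~\ref{Tab:Net}), while the remaining $53-21=32$ nets give two inequivalent matrices each. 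Hence the total count is $21\cdot 1+32\cdot 2=85$.

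It remains only to argue that no further coincidences occur, i.e. that matrices arising from non-isomorphic nets are never monomially equivalent. This is immediate from the correspondence: a monomial equivalence $H''=PH'Q$ with $P,Q$ monomial over $G$ translates, under the permutation-matrix substitution, into an isomorphism of the associated incidence structures that respects the parallel-class partitions, hence an isomorphism of symmetric $(6,3)$-nets; so monomially equivalent matrices always come from the same net. Therefore the $85$ classes found among the $106$ matrices are exactly the monomial equivalence classes of $H(6,3)$, and the count is complete.

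The main obstacle is purely computational rather than conceptual: one must implement and trust an exhaustive monomial-equivalence test on $54\times 54$ matrices over a group of order $3$, which amounts to searching over row/column permutations combined with row/column scalings. Since we only need to test $106$ fixed matrices against each other (not to classify from scratch), and Theorem~\ref{t1} has already pinned down the net side, this is entirely tractable; the delicate points are correctly encoding the inverse operation as the specific net isomorphism (swap of the second and third point and block in each class) so that no spurious or missing identifications arise, and verifying that the $21$ self-paired cases are genuine rather than artifacts of an incomplete search. An independent check is available from Section~\ref{Sec:C}, where the same $85$ matrices should reappear as the full-weight vectors in the $245$ Hermitian self-dual $[18,9]$ codes over $GF(4)$.
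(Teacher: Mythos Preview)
Your proposal is correct and follows essentially the same route as the paper: start from the $53$ nets of Theorem~\ref{t1}, form the $106$ matrices $H(\cD_i),\overline{H(\cD_i)}$, and reduce modulo monomial equivalence to obtain $85$ classes, with exactly $21$ self-paired cases. Your write-up is in fact slightly more explicit than the paper in separating out the argument that cross-net coincidences are impossible (monomial equivalence $\Rightarrow$ net isomorphism), whereas the paper simply runs the equivalence test on all $106$ matrices and observes a posteriori that the $21$ collapses occur only within $(H,\overline{H})$ pairs.
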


In Table \ref{Tab:Net},
the column $\overline{H(\cD_i)}$ states whether the corresponding 
generalized Hadamard
matrix $H(\cD_i)$ is monomially equivalent to the generalized 
Hadamard matrix $\overline{H(\cD_i)}$
obtained by replacing all entries by their inverse.
Thus, the set
$\{H(\cD_i),\overline{H(\cD_j)} \mid i \in \Delta, j \in 
\Delta \setminus \Gamma\}$
gives the $85$ generalized Hadamard matrices, 
where $\Delta=\{1,2,\ldots,53\}$ and
\[
\Gamma=\{
1, 2, 3, 4, 5, 6, 12, 13, 14, 
18, 19, 20, 23, 27, 28, 33, 37, 38, 39, 42, 43
\}.
\]

Concerning the next order, $n=21$,
several examples of $STD_{7}(3)$ and  $H(7,3)$ are known
\cite{AOS}, \cite{Su1}.  Some $STD_{7}(3)$'s and $H(7,3)$'s were used
in \cite{T07} as building blocks for the construction of an infinite
class of quasi-residual 2-designs.
An estimate based on preliminary computations with BDX suggests that it would take
500 CPU years to enumerate all $STD_{7}(3)$'s using one computer, or
about a year of CPU if a network of 500 computers is employed.

%%%%%%%%%%%%%%%%%%%%%%%%%%%%%%%%%%%%%%%%%%%%%
\section{Elementary divisors of generalized Hadamard matrices
and Hermitian self-dual codes}

Let $GF(4)=\{ 0,1,\ww , \vv  \}$ be the finite field of order
four, where $\vv=  \omega^2 = \omega +1$.
Codes over $GF(4)$ are often called quaternary.
The {\em Hermitian inner product} of vectors
$x=(x_1,\ldots,x_n), y=(y_1,\ldots,y_n)\in GF(4)^n$
is defined as
\begin{equation}
\label{herm}
x \cdot y = \sum_{i=1}^{n} x_i {y_i}^2.
\end{equation}
The \textit{Hermitian dual code} $C^{\perp}$ of a code 
$C$ of length $n$ is defined as
$
C^{\perp}=
\{x \in GF(4)^n \mid x \cdot c = 0 \text{ for all } c \in C\}.
$
A code $C$ is  called
\textit{Hermitian self-orthogonal}
if $C\subseteq C^{\perp}$,
and \textit{Hermitian self-dual} if $C = C^{\perp}$. 
In this section, we show that
the rows of any generalized Hadamard matrix $H(6,3)$  
span a Hermitian self-dual code of length 18
and minimum weight $d\ge 4$
(Theorem \ref{thm:sd}).
%% This motivates us to classify all such codes
%% as the second approach of the enumeration of all $H(6,3)$'s.
A consequence of this result is that all $H(6,3)$'s can be found
as collections of vectors of full weight in
Hermitian self-dual codes over $GF(4)$.
This motivates us to classify all such codes
as the second approach of the enumeration of all $H(6,3)$'s.

Let $R$ be a unique factorization domain, and let $p$ be a prime element
of $R$. For a nonzero element $a\in R$, we denote by $\nu_p(a)$
the largest non-negative integer $e$ such that $p^e$ divides $a$.

\begin{lem}\label{lem:gcd}
Let $R$ be a unique factorization domain.
Suppose that the nonzero elements $a,b,c,d\in R$ satisfy 
$ab=cd$ and $\gcd(a,b)=1$. Then
\[
\gcd(a,c)\gcd(a,d)=a.
\]
\end{lem}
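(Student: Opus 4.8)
The statement is a clean fact about unique factorization domains, so I would prove it by comparing $p$-adic valuations $\nu_p$ prime-by-prime, which is the standard way to verify a gcd identity in a UFD. For an equality $x = y$ (up to units — and here both sides are genuinely elements, with the gcd being defined up to units, so I should phrase the conclusion as "associates") it suffices to show $\nu_p(x) = \nu_p(y)$ for every prime $p$ of $R$. So fix a prime $p$ and set $\alpha = \nu_p(a)$, $\beta = \nu_p(b)$, $\gamma = \nu_p(c)$, $\delta = \nu_p(d)$.

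The hypotheses translate as follows. From $ab = cd$ we get $\alpha + \beta = \gamma + \delta$. From $\gcd(a,b) = 1$ we get $\min(\alpha,\beta) = 0$, i.e. at least one of $\alpha,\beta$ is zero. The goal is to show
\[
\nu_p\bigl(\gcd(a,c)\gcd(a,d)\bigr) = \min(\alpha,\gamma) + \min(\alpha,\delta) = \alpha.
\]
So everything reduces to the elementary claim: if $\min(\alpha,\beta) = 0$ and $\alpha + \beta = \gamma + \delta$ with all four nonneg integers, then $\min(\alpha,\gamma) + \min(\alpha,\delta) = \alpha$. I would dispose of this by the obvious case split. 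If $\alpha = 0$, both minima are $0$ and the identity reads $0 = 0$. If $\alpha > 0$, then $\beta = 0$, so $\gamma + \delta = \alpha$, whence $\gamma \le \alpha$ and $\delta \le \alpha$; therefore $\min(\alpha,\gamma) = \gamma$ and $\min(\alpha,\delta) = \delta$, and the left side is $\gamma + \delta = \alpha$, as needed.

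The one genuinely delicate point — the only place this could go wrong — is the degenerate situation where one of $c, d$ might "want" to be zero, but the hypothesis explicitly assumes $a,b,c,d$ are all nonzero, so every $\nu_p$ is a well-defined finite nonnegative integer and the valuation arithmetic is unconditional. I would also note at the outset the standard facts being used: $\nu_p(xy) = \nu_p(x) + \nu_p(y)$, $\nu_p(\gcd(x,y)) = \min(\nu_p(x),\nu_p(y))$, and that an element of $R$ is determined up to a unit by its vector of $p$-adic valuations. Beyond that, there is no real obstacle; the proof is a two-line valuation computation once the bookkeeping is set up.
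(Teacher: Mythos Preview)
Your proof is correct and follows essentially the same $p$-adic valuation argument as the paper: the paper considers only primes $p$ dividing $a$ (where $\nu_p(b)=0$ forces $\nu_p(a)=\nu_p(c)+\nu_p(d)\ge\max\{\nu_p(c),\nu_p(d)\}$, giving $\nu_p(\gcd(a,c))+\nu_p(\gcd(a,d))=\nu_p(c)+\nu_p(d)=\nu_p(a)$), leaving the trivial case $\nu_p(a)=0$ implicit, whereas you make both cases explicit. Your remark that the conclusion should be read up to units is a fair point of care that the paper glosses over.
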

\begin{proof}
Let $p$ be a prime element of $R$ dividing $a$. Then $p$
does not divide $b$, hence
\[
\nu_p(a)=\nu_p(ab)=\nu_p(c)+\nu_p(d)\geq\max\{\nu_p(c),\nu_p(d)\}.
\]
Thus
\begin{align*}
\nu_p(\gcd(a,c))=\min\{\nu_p(a),\nu_p(c)\}&=\nu_p(c),\\
\nu_p(\gcd(a,d))=\min\{\nu_p(a),\nu_p(d)\}&=\nu_p(d),
\end{align*}
and hence
$\nu_p(a)=\nu_p(\gcd(a,c)\gcd(a,d))$. Since $p$ is arbitrary,
we obtain the assertion.
\end{proof}

Let $\omega=\frac{-1+\sqrt{-3}}{2} \in \CC$,
where $\CC$ denotes the complex number field.
It is well known that
$\ZZ[\omega]$ is a principal ideal domain. Thus we can consider
elementary divisors of a matrix over $\ZZ[\omega]$. Also,
$\ZZ[\omega]$ is a unique factorization domain, and $2$ is a prime
element. We note that $\ZZ[\omega]/2\ZZ[\omega]\cong GF(4)$.

\begin{lem}\label{lem:divH}
Let $H$ be an $n\times n$ matrix with entries in $\{1,\omega,\omega^2\}$,
satisfying $H\overline{H}^T=nI$, 
where $\overline{H}$ denotes the complex conjugation.  Let 
$d_1|d_2|\cdots|d_n$ be the elementary divisors of $H$
over the ring $\ZZ[\omega]$.
Then $d_i\overline{d_{n+1-i}}/n$ is a unit in $\ZZ[\omega]$
for all $i=1,\dots,n$.
\end{lem}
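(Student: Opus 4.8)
The plan is to exploit the matrix identity $H\overline{H}^T=nI$ together with the basic theory of Smith normal forms over the PID $\ZZ[\omega]$. First I would recall that the elementary divisors $d_1\mid d_2\mid\cdots\mid d_n$ of $H$ are characterized (up to units) by $d_1d_2\cdots d_i=\gcd$ of all $i\times i$ minors of $H$; write $\delta_i=d_1\cdots d_i$ for this $i$-th determinantal divisor. Applying complex conjugation entrywise to $H$ and noting that $\overline{\omega}=\omega^2\in\ZZ[\omega]$, the matrix $\overline{H}$ has exactly the same elementary divisors as $H$, since conjugation is a ring automorphism of $\ZZ[\omega]$; equivalently the determinantal divisors of $\overline{H}$ are $\overline{\delta_i}$, and these must agree with $\delta_i$ up to units. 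The same is true for $\overline{H}^T$, since transposition does not change minors. So the elementary divisors of $\overline{H}^T$ are again $d_1,\dots,d_n$, i.e.\ $\overline{d_i}$ equals $d_i$ up to a unit.

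Next I would use the key relation $H\overline{H}^T=nI$. Taking determinants gives $\det H\cdot\det\overline{H}^T=n^n$, so $|\det H|^2=n^n$ and in particular $\delta_n=\prod d_i$ is, up to a unit, a divisor of $n^n$ (indeed an associate of a square root of $n^n$). More usefully, I would invoke the standard fact that if $AB=nI$ then the elementary divisors of $A$ and of $B$ "pair up" so that $d_i(A)\,d_{n+1-i}(B)$ is an associate of $n$ for each $i$. The cleanest way to see this: from $H\overline{H}^T=nI$ we get $\overline{H}^T=nH^{-1}$, and the elementary divisors of $nH^{-1}$ are $n/d_{n+1-i}$ (in reverse order) — this is the standard statement that the Smith normal form of $n$ times the inverse of a matrix with divisors $d_1\mid\cdots\mid d_n$ has divisors $n d_n^{-1}\mid n d_{n-1}^{-1}\mid\cdots\mid n d_1^{-1}$, valid because $n d_n^{-1}$ is a genuine element of $\ZZ[\omega]$ (each $d_i\mid n$ follows from $\delta_n\mid n^n$ combined with the divisibility chain, or more directly from $H^{-1}=n^{-1}\overline{H}^T$ having entries in $n^{-1}\ZZ[\omega]$). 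Comparing with the previous paragraph, $\overline{H}^T$ has elementary divisors $d_1,\dots,d_n$ on the one hand and $n/d_n,\dots,n/d_1$ on the other; matching them in the unique increasing order gives that $d_i$ and $n/d_{n+1-i}$ are associates, i.e.\ $d_i d_{n+1-i}/n$ is a unit. Finally, combining this with $\overline{d_{n+1-i}}$ being an associate of $d_{n+1-i}$ yields that $d_i\overline{d_{n+1-i}}/n$ is a unit, as claimed.

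The step I expect to be the main obstacle is making rigorous the passage "elementary divisors of $nH^{-1}$ are $n/d_{n+1-i}$," i.e.\ establishing that each $d_i$ divides $n$ in $\ZZ[\omega]$ and that the reversal is correct, rather than waving at it. Here Lemma~\ref{lem:gcd} looks like the intended tool: one likely wants to argue prime-by-prime (for each prime $p\in\ZZ[\omega]$, using $\nu_p$) that $\nu_p(d_i)+\nu_p(d_{n+1-i})=\nu_p(n)$, by comparing the determinantal divisor identities $\delta_i(\overline H^T)=\delta_i(H)$ with the cofactor/adjugate relation $H\cdot\mathrm{adj}(H)=\det(H)\,I$ and $\overline H^T=nH^{-1}=\frac{n}{\det H}\mathrm{adj}(H)$. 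Concretely, the $(n-i)$-th determinantal divisor of $\mathrm{adj}(H)$ is known to equal $\delta_n(H)^{n-i-1}\cdot(\delta_n/\delta_i)$-type expressions; rather than chase these classical formulas I would instead phrase everything via $\nu_p$ of determinantal divisors and feed the resulting multiplicative identity $ab=cd$ with $\gcd(a,b)=1$ (coming from consecutive quotients $d_{i+1}/d_i$) into Lemma~\ref{lem:gcd}. Once the local statement $\nu_p(d_i)+\nu_p(d_{n+1-i})=\nu_p(n)$ is secured for all $p$, the global conclusion that $d_i d_{n+1-i}/n$ — hence $d_i\overline{d_{n+1-i}}/n$ — is a unit is immediate, since a nonzero element of $\ZZ[\omega]$ with $\nu_p=0$ for every prime $p$ is a unit.
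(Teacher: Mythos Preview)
Your overall strategy---bring $H$ to Smith normal form, then compute the elementary divisors of $\overline{H}^T=nH^{-1}$ a second way and compare---is exactly the paper's approach, but there is a genuine error in the execution. You assert that ``the matrix $\overline{H}$ has exactly the same elementary divisors as $H$, since conjugation is a ring automorphism of $\ZZ[\omega]$,'' and conclude that $\overline{d_i}$ equals $d_i$ up to a unit. Conjugation being an automorphism only tells you that the elementary divisors of $\overline{H}$ are $\overline{d_1},\dots,\overline{d_n}$; it does \emph{not} follow that $\overline{d_i}\sim d_i$. Indeed this fails in general: a rational prime $p\equiv1\pmod3$ splits in $\ZZ[\omega]$ as $p=\pi\overline{\pi}$ with $\pi,\overline{\pi}$ non-associate, so an element divisible by $\pi$ but not $\overline{\pi}$ is not an associate of its own conjugate. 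You then use this false claim twice (once to say the divisors of $\overline{H}^T$ are the $d_i$, once at the end to pass from $d_{n+1-i}$ to $\overline{d_{n+1-i}}$), and the two errors happen to cancel.

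The fix is simply to drop that claim. The elementary divisors of $\overline{H}^T$ are $\overline{d_1},\dots,\overline{d_n}$ on the one hand and $n/d_n,\dots,n/d_1$ on the other; matching gives $\overline{d_i}\,d_{n+1-i}/n$ a unit, and conjugating yields the statement directly. The paper does precisely this in one stroke: with $PHQ=\diag(d_1,\dots,d_n)$ it computes
\[
\overline{Q}^{-1}H^T\overline{P}^{-1}=n\,\overline{PHQ}^{-1}=\diag(n/\overline{d_1},\dots,n/\overline{d_n}),
\]
exhibiting $n/\overline{d_n}\mid\cdots\mid n/\overline{d_1}$ as the elementary divisors of $H^T$ (hence of $H$). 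Note that your ``main obstacle''---showing each $d_i$ divides $n$---evaporates here, since the left-hand side visibly has entries in $\ZZ[\omega]$; Lemma~\ref{lem:gcd} is not used in this proof at all (it enters only later, in Theorem~\ref{thm:sd}).
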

\begin{proof}
Take $P,Q\in \GL(n,\ZZ[\omega])$ so that $PHQ=\diag(d_1,\dots,d_n)$.
Since $H\overline{H}^T=nI$, we have
\begin{align*}
\overline{Q}^{-1}H^T\overline{P}^{-1}
&=
n\overline{Q}^{-1}\overline{H}^{-1}\overline{P}^{-1}
\nexteq
n\overline{PHQ}^{-1}
\nexteq
\diag(n/\overline{d_1},n/\overline{d_{2}},\dots,n/\overline{d_n}).
\end{align*}
This implies that 
$n/\overline{d_n},n/\overline{d_{n-1}},\dots,n/\overline{d_1}$
are also the elementary divisors of $H$. 
It follows from the uniqueness of the elementary divisors that
$d_i\overline{d_{n+i-i}}/n$ is a unit in $\ZZ[\omega]$ for
all $i=1,\dots,n$.
\end{proof}

%% A consequence of this result is that all $H(6,3)$'s can be found
%% as collections of vectors of full weight in
%% Hermitian self-dual codes over $GF(4)$.
%% This motivates us to classify all such codes
%% as the second approach of the enumeration of all $H(6,3)$'s.

\begin{thm}\label{thm:sd}
Under the same assumptions as in Lemma~\ref{lem:divH}, assume
further that $n\equiv2\pmod4$.
Then the rows of $H$ span 
a Hermitian self-dual code over $\ZZ[\omega]/2\ZZ[\omega]\cong GF(4)$.
%% The self-dual code has minimum weight at least $4$.
This Hermitian self-dual code has minimum weight at least $4$.
\end{thm}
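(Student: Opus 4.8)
The plan is to prove the two assertions separately, in both cases exploiting the elementary-divisor information supplied by Lemma~\ref{lem:divH}. For the self-duality, first observe that reducing $H$ modulo $2\ZZ[\omega]$ gives a matrix $\bar H$ over $GF(4)$ whose rows span the code $C$ in question, and that the Hermitian inner product \eqref{herm} on $GF(4)^n$ is exactly the reduction mod~$2$ of the form $x\mapsto x\overline{H}^T$ applied to $H$'s rows, since squaring in $GF(4)$ corresponds to complex conjugation on $\ZZ[\omega]/2\ZZ[\omega]$. The relation $H\overline H^T=nI$ with $n\equiv2\pmod4$, i.e.\ $n\equiv0\pmod2$, then shows $\bar H\overline{\bar H}^T=0$, so $C$ is Hermitian self-orthogonal. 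To upgrade this to self-duality it suffices to check $\dim_{GF(4)}C=n/2$; equivalently, that exactly $n/2$ of the elementary divisors $d_i$ are units and $n/2$ are divisible by $2$. Here is where Lemma~\ref{lem:divH} does the work: $d_i\overline{d_{n+1-i}}/n$ is a unit, so $\nu_2(d_i)+\nu_2(d_{n+1-i})=\nu_2(n)=1$ for every $i$ (using $n\equiv2\pmod4$). Hence the multiset $\{\nu_2(d_i)\}$ pairs up into pairs summing to $1$, forcing each such pair to be $\{0,1\}$; since $n\equiv2\pmod4$ the index $i=(n+1)/2$ is not an integer so there is no unpaired middle divisor, and we get exactly $n/2$ units. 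Therefore $\dim C=n/2$ and $C=C^{\perp}$.

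For the minimum-weight claim, the first reduction is that any Hermitian self-dual code over $GF(4)$ is in particular self-orthogonal, and over $GF(4)$ Hermitian self-orthogonality forces every codeword to have even weight (since $c\cdot c=\sum c_i c_i^2=\sum c_i^3=\wt(c)\cdot 1$ in $GF(4)$, as $c_i^3=1$ for $c_i\ne0$); thus $d$ is even and it is enough to rule out $d=2$. Suppose for contradiction some row-space vector has weight $2$. I would lift it: a weight-$2$ codeword is the reduction mod~$2$ of a $\ZZ[\omega]$-combination of rows of $H$ supported (modulo~$2$) on two coordinates. Concretely, there is a nonzero vector $v\in\ZZ[\omega]^n$, not all entries divisible by $2$, with $vH$ having all but two entries in $2\ZZ[\omega]$ — wait, it is cleaner to work with columns: pick two columns of $\bar H$ and a dependency. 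I would instead argue directly on $H$: two distinct columns $h_s,h_t$ of $H$ satisfy $h_s\cdot\overline{h_t}^{\,T}=0$ over $\ZZ[\omega]$ because $H\overline H^T=nI$ gives orthogonality of distinct rows, and transposing (using that $\overline H^T H=nI$ as well, since $H$ is square and $H\overline H^T=nI$) gives orthogonality of distinct columns too. A weight-$2$ codeword in $C$ would be a vector $e$ with support $\{a,b\}$ and entries in $GF(4)^{*}$ lying in $C$; then for every row $r$ of $H$, $\bar r\cdot \bar e=0$, i.e.\ $\bar r_a \bar e_a^2+\bar r_b\bar e_b^2=0$, so the $a$-th and $b$-th columns of $\bar H$ are scalar multiples of each other over $GF(4)$.

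Hence the obstruction reduces to: the columns of $\bar H$ are pairwise linearly independent over $GF(4)$. I expect this to be the main obstacle, and I would handle it with a $2$-adic argument. If columns $a,b$ of $\bar H$ are proportional, then over $\ZZ[\omega]$ we have $h_a=\lambda h_b+2w$ for some $\lambda\in\ZZ[\omega]$ (a lift of the $GF(4)$-scalar, a unit or with one coordinate forcing $\nu_2(\lambda)=0$) and some $w\in\ZZ[\omega]^n$. Feeding this into the column-orthogonality relations $h_a\cdot\overline{h_c}^{\,T}=n\delta_{ac}$ for $c\ne a,b$, together with $h_b\cdot\overline{h_c}^{\,T}=n\delta_{bc}=0$, yields $2w\cdot\overline{h_c}^{\,T}=0$ for all $c\ne a,b$, i.e.\ $w$ is orthogonal to all but two columns; and the two remaining relations $h_a\cdot\overline{h_a}^{\,T}=n$, $h_a\cdot\overline{h_b}^{\,T}=0$ combine with $|\lambda|^2\equiv1\pmod2$ (as $\nu_2(\lambda)=0$, so $\lambda\overline\lambda$ is an odd integer) to pin down $2w\cdot\overline{h_a}^{\,T}$ and $2w\cdot\overline{h_b}^{\,T}$ modulo $4$ in a way incompatible with $n\equiv2\pmod4$. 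Carrying out this congruence bookkeeping — essentially computing $\nu_2$ of the relevant inner products and using that each entry of $H$ is a unit of norm~$1$ — is the technical heart; I would organize it by passing to the ring $\ZZ[\omega]/4\ZZ[\omega]$, where $2$ is nilpotent, and reading off the contradiction there. An alternative, possibly shorter, route is to invoke the elementary-divisor count from the first part: proportional columns would drop the $2$-adic rank of $\bar H$ below $n/2$, contradicting $\dim C=n/2$; I would check whether this cleaner argument in fact suffices to exclude $d=2$ and, if so, present it instead.
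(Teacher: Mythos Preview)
Your self-duality argument is correct and follows the paper's strategy---both use the elementary divisors together with Lemma~\ref{lem:divH}. Your execution is in fact slightly more direct: from $d_i\overline{d_{n+1-i}}/n$ being a unit you read off $\nu_2(d_i)+\nu_2(d_{n+1-i})=\nu_2(n)=1$, so in each pair $\{i,n+1-i\}$ exactly one $d_i$ is divisible by $2$, whence exactly $n/2$ of the $\bar d_i$ are nonzero and $\dim C=n/2$. The paper reaches the same conclusion by computing $|C|=\prod_i 4/|\gcd(2,d_i)|^2$ and pairing the factors via Lemma~\ref{lem:gcd}; the content is the same.

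For the minimum-weight claim, your reduction to ``no two columns of $\bar H$ are $GF(4)$-proportional'' is again exactly what the paper does. But you then overlook the one-line finish and instead head into an unfinished $2$-adic computation. The missing observation is that the entries of $H$ lie in $\{1,\omega,\omega^2\}$, and reduction $\ZZ[\omega]\to GF(4)$ sends this set \emph{bijectively} onto $GF(4)^*=\{1,\omega,\vv\}$. Hence if two columns of $\bar H$ are proportional with ratio $\bar\lambda\in GF(4)^*$, then for every row $i$ the quotient $h_{ia}h_{ib}^{-1}\in\{1,\omega,\omega^2\}$ is uniquely determined by its image $\bar\lambda$, so $h_a=\lambda' h_b$ already in $\CC$ for the lift $\lambda'\in\{1,\omega,\omega^2\}$---contradicting nonsingularity of $H$. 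In your lifting $h_a=\lambda h_b+2w$ this is simply the remark that a difference of two elements of $\{1,\omega,\omega^2\}$ lying in $2\ZZ[\omega]$ must be $0$ (such a nonzero difference has norm $3$), forcing $w=0$ outright. Your proposed alternative (``proportional columns would drop the rank of $\bar H$ below $n/2$'') does not work: two proportional columns only force column rank at most $n-1$, which is perfectly compatible with rank $n/2$.
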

\begin{proof}
Let $C$ be the code over $\ZZ[\omega]/2\ZZ[\omega]$
spanned by the row vectors
of $H$. 
%% Let $d_1|d_2|\cdots|d_n$ be the elementary divisors of $H$.
%% 2010.2.28 (by H after submission)
%%
Since $H\overline{H}^T\equiv0\pmod{2\ZZ[\omega]}$, the code $C$ is Hermitian
self-orthogonal (see also \cite[Lemma 2]{T09}).
 Let $d_1|d_2|\cdots|d_n$ be the elementary divisors of $H$. 
Then
\begin{align*}
|C|&=|(\ZZ[\omega]/2\ZZ[\omega])^nH|
\nexteq
|(\ZZ[\omega]/2\ZZ[\omega])^n\diag(d_1,\dots,d_n)|
\nexteq
\prod_{i=1}^{n}|\gcd(2,d_i)\ZZ[\omega]/2\ZZ[\omega]|
\nexteq
\prod_{i=1}^{n}
\frac{|\ZZ[\omega]/2\ZZ[\omega]|}{|\ZZ[\omega]/
\gcd(2,d_i)\ZZ[\omega]|}
\nexteq
\prod_{i=1}^{n}
\frac{4}{|\gcd(2,d_i)|^2}
\nexteq
\prod_{i=1}^{n/2}
\frac{4}{|\gcd(2,d_i)|^2}
\prod_{i=n/2+1}^{n}
\frac{4}{|\gcd(2,d_i)|^2}
\nexteq
\prod_{i=1}^{n/2}
\frac{4}{|\gcd(2,d_i)|^2}
\prod_{i=n/2+1}^{n}
\frac{4}{|\gcd(2,n/\overline{d_{n+1-i}})|^2}
\qquad \text{(by Lemma~\ref{lem:divH})}
% &&\text{(by Lemma~\ref{lem:divH})}
\nexteq
\prod_{i=1}^{n/2}
\frac{4}{|\gcd(2,d_i)|^2}
\prod_{i=n/2+1}^{n}
\frac{4}{|\gcd(2,n/d_{n+1-i})|^2}
\nexteq
\prod_{i=1}^{n/2}
\frac{4}{|\gcd(2,d_i)|^2}
\prod_{i=1}^{n/2}
\frac{4}{|\gcd(2,n/d_i)|^2}
\nexteq
\prod_{i=1}^{n/2}
\frac{16}{|\gcd(2,d_i)\gcd(2,n/d_i)|^2}
\nexteq
4^{n/2}.
\hspace{4cm} \text{(by Lemma~\ref{lem:gcd} since $n \equiv 2 \pmod 4$)}
\end{align*}
Thus, the dimension $\dim C$ is $n/2$ and $C$ is self-dual.

%% It follows from $H\overline{H}^T=nI$  that
%% $H^T \overline{H}=nI$
%% which implies that the column vectors of $H$ are pairwise orthogonal
%% with respect to the Hermitian inner product (\ref{herm}). 
%% In particular, the column vectors of $H$ are linearly independent 
%% over $\CC$.
%% %% over the complex number field $\CC$.

If the dual code $C^\perp$ had minimum weight $2$, 
then there exist two columns of $H$, one of which is 
a multiple by $1, \ww$, or $\vv$ of the other, in $GF(4)$. 
But this implies that there exists a column of $H$ which is
a multiple by $1, \ww$, or $\vv$ in $\CC$.
%% This contradicts the linear independence of column vectors shown above.
This is impossible since $H$ is nonsingular.
Hence the dual code $C^\perp$ has minimum weight at least $3$.
Since $C$ is self-dual and even, $C$ has minimum weight at least $4$.
\end{proof}

%% The above theorem suggests the possibility of enumerating all $H(6,3)$'s
%% as collections of vectors of full weight in self-dual codes over $GF(4)$.
%% This motivates us to classify all such codes
%% as the second approach of the enumeration of all $H(6,3)$'s.

\section{The classification of quaternary self-dual $[18,9]$ codes }
\label{Sec:C}

Two codes $C$ and $C'$ over $GF(4)$
are \textit{equivalent} if there is a monomial
matrix $M$ over $GF(4)$ such that $C' =C M =\{c M \mid c \in C \}$.
A monomial matrix which maps $C$ to itself is called an {\em automorphism} 
of $C$ and  the set of all automorphisms of $C$ forms the 
automorphism group $\Aut(C)$ of $C$.
The number of distinct Hermitian self-dual codes of 
length $n$ is given  \cite{MOSW} by the formula: 
\begin{equation}
\label{num}
N(n)=\prod_{i=0}^{n/2-1}(2^{2i+1}+1).
\end{equation}

It was shown in  \cite{MOSW} that
the minimum weight $d$ of a Hermitian self-dual code of
length $n$ is bounded by
$d \leq 2 \lfloor n/6 \rfloor +2$.
A Hermitian self-dual code of length $n$ and minimum weight 
$d=2 \lfloor n/6 \rfloor +2$ is called \textit{extremal}. 
The classification of all Hermitian self-dual codes over $GF(4)$ up 
to equivalence 
of length $n\le14$ was completed by MacWilliams, Odlyzko, Sloane and Ward
\cite{MOSW}, and the Hermitian self-dual codes of length 16 were classified 
by Conway, Pless and Sloane  \cite{CPS}.
%%%
For example, 
there are $55$ inequivalent Hermitian self-dual codes of length $16$.
%%%
For the next two lengths, $18$ and $20$, only partial classification was
% previously known, namely, the extremal 
% Hermitian self-dual $[18,9,8]$ and $[20,10,8]$ 
% codes were enumerated by Huffman \cite{Huffman}
% under the weak equivalence defined at the end of this subsection.
previously known, namely, the extremal 
Hermitian self-dual $[18,9,8]$ and $[20,10,8]$ 
codes were enumerated in \cite{Huffman} and
Hermitian self-dual $[18,9,6]$ codes were enumerated 
in \cite{BO06}
under the weak equivalence defined at the end of this subsection.

%\subsection{Decomposable codes}
We first consider decomposable Hermitian self-dual codes.
By \cite[Theorem 28]{MOSW}, any Hermitian self-dual code
with minimum weight $2$ is decomposable as $C_2\oplus C_{16}$,
where $C_2$ is the unique Hermitian self-dual code of length $2$
and $C_{16}$ is some Hermitian self-dual code of length $16$.
Hence, there are $55$ inequivalent Hermitian self-dual codes
with minimum weight $2$ \cite{CPS}.
In the notation of Table \ref{Tab:C}, the following codes
are decomposable Hermitian self-dual codes with minimum
weight $4$:
\[
E_8\oplus E_{10},
E_8\oplus B_{10},
E_6\oplus E_{12},
E_6\oplus C_{12},
E_6\oplus D_{12},
E_6\oplus F_{12},
E_6\oplus 2E_6,
\]
and there is no decomposable Hermitian self-dual code with minimum
weight $d \ge 6$.
In Table \ref{Tab:18}, the number $\#_{\text{dec}}$
of inequivalent decomposable Hermitian self-dual codes with minimum
weight $d$ is given for each admissible value of $d$.

%%%%%%%%%%%%%%%%%%%%%%%%%%%%%%%%%%%%%%%%%%%%%%%%
\begin{table}[thb]
\caption{Hermitian self-dual codes of length $18$}
\label{Tab:18}
\begin{center}
{\small
\begin{tabular}{|c|cccc|c|}
%% \noalign{\hrule height0.8pt}
\hline
                  & $d=2$ & $d=4$ & $d=6$ & $d=8$ & Total \\
\hline
$\#_{\text{dec}}$ &  55   &    7  &   0   &    0  &   62  \\
$\#_{\text{indec}}$ &   0   &  152  &  30   &    1  &  183  \\
\hline
Total             &  55   &  159  &  30   &    1  &  245  \\
%% \noalign{\hrule height0.8pt}
\hline
   \end{tabular}
}
\end{center}
\end{table}
%%%%%%%%%%%%%%%%%%%%%%%%%%%%%%%%%%%%%%%%%%%%%%%%

%%%%%%%%
%\subsection{Indecomposable codes}
We now consider indecomposable Hermitian self-dual codes.
Two self-dual codes $C$ and $C'$ of length $n$
are called {\em neighbors} if the dimension of their intersection 
is $n/2-1$.
An extremal Hermitian self-dual code $S_{18}$ of length $18$
was given in \cite{MOSW} and it is generated by
\[
(1,\ww,\vv,\ww,\ww,\ww,\vv,\vv,\vv,\vv,\vv,\vv,\ww,\ww,\ww,\vv,\ww)~1
\]
where the parentheses indicate %as usual
%% where the parenthesis indicate %as usual 
that all cyclic shifts are to be used.
Let $\Nei(C)$ denote the set of inequivalent Hermitian self-dual
neighbors with minimum weight $d \ge 4$ of $C$.
We found that the set $\Nei(S_{18})$ consists of 
$35$ inequivalent Hermitian self-dual codes,
one of which is equivalent to $S_{18}$,
$17$ codes have minimum weight $6$, and 
$17$ codes have minimum weight $4$.
Within the set of codes
\[
\{S_{18}\} \cup
\Nei(S_{18}) \cup
{\cal N} \cup \Big(\bigcup_{C \in {\cal N}} \Nei(C) \Big),
\]
where ${\cal N}=\cup_{C \in \Nei(S_{18})} \Nei(C)$,
we found a set $\mathcal{C}_{18}$ of $190$
inequivalent Hermitian self-dual codes $C_1,\ldots,C_{190}$
with minimum weight $d \ge 4$ 
satisfying
\begin{equation}
\label{mass}
\sum_{C \in \mathcal{C}_{18} \cup \mathcal{D}_{18}} 
\frac{3^{18} \cdot 18!}{\#\Aut(C)} = 4251538544610908358733563 =N(18),
\end{equation}
where $\mathcal{D}_{18}$ denotes the set of the $55$ inequivalent
Hermitian self-dual codes of length $18$ and 
minimum weight $2$.
The orders of the automorphism groups of the $245$ codes
in $\mathcal{C}_{18}\cup \mathcal{D}_{18}$
are listed in Table \ref{Tab:Aut}.
The mass formula (\ref{mass}) shows that 
the set $\mathcal{C}_{18} \cup \mathcal{D}_{18}$ of codes 
contains representatives of all equivalence classes of
Hermitian self-dual codes  
of length $18$. Thus,  the classification is complete,
and Theorem \ref{thm:18} holds.

\begin{thm}\label{thm:18}
There are $245$ inequivalent Hermitian self-dual codes of length $18$.
Of these,  one is extremal (minimum weight $8$), 
$30$ codes have minimum weight $6$,
$159$ codes have minimum weight $4$, and $55$ codes have minimum weight $2$.
\end{thm}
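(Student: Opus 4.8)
The plan is to establish Theorem~\ref{thm:18} by a computer-assisted classification that combines a neighbor search with a mass-formula check, using Theorem~\ref{thm:sd} only implicitly (it guarantees that the generalized Hadamard matrices land among the codes being classified, but the classification itself is self-contained). First I would recall that, by equation~(\ref{num}), the total number of distinct Hermitian self-dual codes of length $18$ is the explicit integer $N(18)$, and that, by the mass formula, $\sum_C 3^{18}\cdot 18!/\#\Aut(C) = N(18)$ where $C$ runs over a transversal of the equivalence classes; hence verifying the mass formula for a candidate list certifies completeness.

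The enumeration itself proceeds in two pieces. For the decomposable codes, I would invoke \cite[Theorem~28]{MOSW} to reduce any decomposable Hermitian self-dual code of length $18$ to a direct sum $C_k \oplus C_{18-k}$ of shorter Hermitian self-dual codes, and then read off the possibilities from the completed classifications of lengths $\le 16$ (in particular the $55$ codes of length $16$ from \cite{CPS} give exactly the $55$ minimum-weight-$2$ codes as $C_2 \oplus C_{16}$, and the decompositions into $E_6,E_8$ plus a length-$10$ or length-$12$ summand give the $7$ decomposable codes of minimum weight $4$); this yields the $62$ decomposable codes in Table~\ref{Tab:18}. For the indecomposable codes of minimum weight $\ge 4$, I would start from the extremal code $S_{18}$ of \cite{MOSW}, compute $\Nei(S_{18})$, then iterate the neighbor operation twice more (as in the displayed set built from $S_{18}$, $\Nei(S_{18})$, $\mathcal{N}$, and $\bigcup_{C\in\mathcal{N}}\Nei(C)$), sorting the resulting codes into equivalence classes to obtain the set $\mathcal{C}_{18}$ of $190$ codes; connectivity of the neighbor graph on indecomposable self-dual codes of fixed length guarantees that iterating neighbors from any single starting code eventually reaches all of them, so the only question is whether three rounds suffice.

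The decisive step is the verification of the mass formula~(\ref{mass}): I would compute $\#\Aut(C)$ for each of the $245$ codes in $\mathcal{C}_{18}\cup\mathcal{D}_{18}$ (these orders are tabulated in Table~\ref{Tab:Aut}), form the sum $\sum_{C}\frac{3^{18}\cdot 18!}{\#\Aut(C)}$, and check that it equals $N(18)=4251538544610908358733563$. Since every term of the sum is the size of the equivalence class of $C$ among the $N(18)$ labeled codes, equality forces $\mathcal{C}_{18}\cup\mathcal{D}_{18}$ to be a complete transversal; this simultaneously proves that no equivalence class was missed and that no two listed codes are equivalent. The weight-distribution count in the statement (one code of minimum weight $8$, namely $S_{18}$; $30$ of weight $6$; $159$ of weight $4$; $55$ of weight $2$) then follows by simply recording the minimum weight of each of the $245$ codes, with the weight-$8$ uniqueness being the already-known extremal classification of \cite{Huffman} and the count $159 = 152 + 7$ splitting into indecomposable and decomposable parts as in Table~\ref{Tab:18}.

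The main obstacle is computational rather than conceptual: one must be confident that the neighbor iteration has been carried far enough that $\mathcal{C}_{18}$ is actually all of the indecomposable codes of minimum weight $\ge 4$, and the equivalence testing of quaternary codes of length $18$ (and the automorphism-group computations) must be done reliably and at scale. The mass formula is what ultimately resolves this worry — it is a closed, checkable identity — but producing the correct list of $\#\Aut(C)$ values and exhausting the neighbor graph are where the real work lies; everything after that is bookkeeping. A secondary point to be careful about is the distinction between full (monomial) equivalence over $GF(4)$ and the weak equivalence used in \cite{BO06}, so that the count of $30$ minimum-weight-$6$ codes is not conflated with the weaker classification already in the literature.
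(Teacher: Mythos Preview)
Your proposal is correct and follows essentially the same approach as the paper: enumerate decomposable codes via the known classifications of shorter lengths, build the indecomposable ones by iterating the neighbor operation from $S_{18}$ three times, and certify completeness by the mass formula~(\ref{mass}) using the automorphism-group orders in Table~\ref{Tab:Aut}. The only minor remark is that the paper does not invoke connectivity of the neighbor graph as a justification---it relies solely on the mass formula for completeness---but this does not affect the validity of your outline.
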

The software package {\sc Magma}~\cite{Magma} was used in the computations.
Generator matrices of all Hermitian self-dual codes of length $18$
can be obtained electronically from %%~\cite{Data}.
\begin{verbatim}
 www.math.is.tohoku.ac.jp/~munemasa/selfdualcodes.htm.
\end{verbatim}

%%%%%%%%%%%%%%%%%%%%%%%%%%%%%%%%%%%%%%%%%%%%%%%%
\begin{table}[thb]
\caption{Orders of the automorphism groups}
\label{Tab:Aut}
\begin{center}
%{\small
{\footnotesize
%{\scriptsize
\begin{tabular}{|c|l|}
%% \noalign{\hrule height0.8pt}
\hline
$d$ & \multicolumn{1}{c|}{$\#\Aut(C)$} \\
\hline
2
&
864, 864, 1152, 1728, 2160, 2304, 2592, 6048, 6912, 6912, 10368, 13824, 13824, 
\\ &
17280, 20736, 41472, 82944, 82944, 82944, 82944, 103680, 110592, 124416, 235872, 
\\ &
248832, 311040, 331776, 497664, 580608, 829440, 995328, 995328, 1327104, 
\\ &
2073600, 2177280, 2488320, 3110400, 4478976, 12192768, 13436928, 18662400, 
\\ &
37324800, 39191040, 69672960, 89579520, 92897280, 139968000, 179159040, 
\\ &
195084288, 313528320, 671846400, 3023308800, 3762339840, 36279705600, 
\\ &
3656994324480
\\
\hline
4
&
24, 24, 24, 24, 24, 24, 24, 36, 48, 48, 48, 48, 72, 72, 72, 72, 72, 72, 96, 
96, 96, 96, 
\\ &
96, 96, 96, 144, 144, 144, 144, 144, 192, 192, 192, 192, 192, 192, 192, 192, 
192, 
\\ &
288, 288, 288, 288, 288, 288, 288, 288, 288, 384, 384, 384, 384, 
384, 384, 432, 
\\ &
504, 576, 576, 576, 768, 768, 864, 864, 1152, 1152, 1152, 1152, 
1152, 1152, 1152, 
\\ &
1152, 1152, 1152, 1152, 1152, 1536, 1728, 2304, 2304, 2304, 
2304, 2304, 3072, 
\\ &
3456, 3456, 4608, 4608, 4608, 5760, 6144, 6912, 6912, 6912, 
6912, 6912, 6912, 
\\ &
6912, 9216, 10368, 10368, 12960, 13824, 13824, 13824, 13824, 
13824, 13824, 
\\ &
14040, 17280, 17280, 18432, 18432, 18432, 20736, 27648, 27648, 
34560, 48384, 
\\ &
51840, 55296, 55296, 55296, 55296, 62208, 69120, 82944, 82944, 
103680, 124416, 
\\ &
138240, 138240, 145152, 207360, 207360, 221184, 221184, 248832, 
248832, 248832, 
\\ &
414720, 518400, 552960, 725760, 967680, 1105920, 1658880, 2032128, 
3110400, 
\\ &
3732480, 4147200, 4147200, 11197440, 11664000, 23224320, 32659200, 
74649600, 
\\ &
87091200, 278691840, 7558272000
\\
\hline
6
&
6, 12, 12, 12, 12, 12, 18, 24, 24, 27, 36, 36, 36, 36, 36, 54, 54, 72, 96, 
180, 180, 
\\ &
216, 216, 288, 648, 1080, 1152, 1296, 2916, 23328
\\
\hline
8 & 24480 \\
%% \noalign{\hrule height0.8pt}
\hline
   \end{tabular}
}
\end{center}
\end{table}
%%%%%%%%%%%%%%%%%%%%%%%%%%%%%%%%%%%%%%%%%%%%%%%%

In Table \ref{Tab:18}, the number $\#_{\text{indec}}$
of indecomposable Hermitian self-dual codes with minimum
weight $d$ is given.
In Table \ref{Tab:C}, the number $\#$ of inequivalent
Hermitian self-dual codes of length $n$ is given
along with references.
The largest minimum weight $d_{\text{max}}$ among
Hermitian self-dual codes of length $n$ and
the number $\#_{\text{max}}$ of inequivalent
Hermitian self-dual codes with minimum weight $d_{\text{max}}$
are also listed along with references.

%%%%%%%%%%%%%%%%%%%%%%%%%%%%%%%%%%%%%%%%%%%%%%%%
\begin{table}[thb]
\caption{Hermitian self-dual codes}
\label{Tab:C}
\begin{center}
{\small
\begin{tabular}{|c|cl|ccl|}
%% \noalign{\hrule height0.8pt}
\hline
$n$ & $\#$ & \multicolumn{1}{c|}{References} &
$d_{\text{max}}$ & $\#_{\text{max}}$ & \multicolumn{1}{c|}{References}\\
\hline
 2 &   1 & \cite{MOSW}& 2 & 1 & $C_2$ in \cite{MOSW} \\
 4 &   1 & \cite{MOSW}& 2 & 1 & $2C_2$ in \cite{MOSW} \\
 6 &   2 & \cite{MOSW}& 4 & 1 & $E_6$ in \cite{MOSW} \\
 8 &   3 & \cite{MOSW}& 4 & 1 & $E_8$ in \cite{MOSW} \\
10 &   5 & \cite{MOSW}& 4 & 2 & $E_{10},B_{10}$ in \cite{MOSW} \\
12 &  10 & \cite{MOSW}& 4 & 5
     & $E_{12},C_{12},D_{12},F_{12},2E_6$ in \cite{MOSW} \\
14 &  21 & \cite{MOSW}& 6 & 1 & \cite{MOSW} \\
16 &  55 & \cite{CPS} & 6 & 4 & \cite{CPS}  \\
18 & 245 & Section \ref{Sec:C} & 8 & 1 & \cite{Huffman} \\
20 &   ? & & 8 & 2 & \cite{Huffman} \\
%% \noalign{\hrule height0.8pt}
\hline
   \end{tabular}
}
\end{center}
\end{table}

We list in  Table \ref{Tab:Ex} eleven Hermitian self-dual codes
$C_{10}$, $C_{14}$, $C_{15}$, $C_{17}$,
$C_{30}$, $C_{38}$, $C_{40}$, $C_{83}$,
$C_{120}$, $C_{147}$ and $C_{190}$ of minimum weight at least 4,
which are used in the next subsection.
Table \ref{Tab:Ex} lists
the dimension $\dim$ of $S_{18} \cap C_i$,
vectors $v_1,\ldots,v_{9-\dim}$ such that 
\[
C_i= \langle S_{18} \cap 
\langle v_1,\ldots,v_{9-\dim} \rangle^\perp,
v_1,\ldots,v_{9-\dim}\rangle,
\]
the numbers $A_4$ and $A_6$ of codewords of weights $4$ and $6$,
and the order $\#\Aut$ of the automorphism group of $C_i$.
By  \cite[Theorem 13]{MOSW}, the weight enumerator of 
a Hermitian self-dual code of length $18$
and minimum weight at least 4
 can be written as
\begin{align*}
&
1 + A_4 y^4 + A_6 y^6 
+ (2754   + 27 A_4  -  6 A_6) y^8 
+ (18360  - 106 A_4 + 15 A_6) y^{10} 
\\ &
+ (77112  + 119 A_4 - 20 A_6) y^{12} 
+ (110160 - 12 A_4  + 15 A_6) y^{14} 
\\ &
+ (50949  - 51 A_4  -  6 A_6) y^{16} 
+ (2808   + 22 A_4  +    A_6) y^{18}.
\end{align*}
Thus, 
the weight enumerator is uniquely determined by $A_4$ and $A_6$. 

%%%%%%%%%%%%%%%%%%%%%%%%%%%%%%%%%%%%%%%%%%%%%%%%
\begin{table}[thb]
\caption{The codes $C_i$ ($i=10,14,15,17,30, 38, 40, 83, 120, 147, 190$)}
\label{Tab:Ex}
\begin{center}
%{\small
{\footnotesize
%{\scriptsize
\begin{tabular}{|c|c|l|rr|r|}
%% \noalign{\hrule height0.8pt}
\hline
$i$ & $\dim$ &\multicolumn{1}{c|}{$v_1,\ldots,v_{9-\dim}$} 
&\multicolumn{1}{c}{$A_4$} 
&\multicolumn{1}{c|}{$A_6$} 
&\multicolumn{1}{c|}{$\#\Aut$} \\
\hline
{10} &8& $(1,1,1,\ww,1,1,1,1,1,\vv,\vv,\vv,\ww,\vv,\vv,\vv,0,0)$
&  0& 45&        180\\ \hline
%% {11} &8& $(1,1,1,1,1,1,1,1,1,\vv,\ww,1,\ww,\ww,\ww,\ww,\ww,\ww)$
%% &  0& 45&        180\\ \hline
{14} &8& $(1,1,1,1,1,1,1,1,1,1,\vv,\ww,0,0,0,0,0,0            )$
&  0& 27&       2916\\ \hline
{15} &8& $(1,\ww,1,1,1,1,1,1,1,1,\ww,\vv,0,\vv,0,1,\ww,\ww    )$
&  0& 27&        648\\ \hline
{17} &8& $(1,1,1,1,1,1,1,1,1,0,\ww,0,\ww,\vv,\ww,\ww,\ww,\ww  )$
&  0& 99&       1080\\ \hline
{30} &8& $(1,1,1,1,1,1,1,1,1,\ww,\ww,0,\vv,0,0,0,0,0          )$
&  9& 36&       2304\\ \hline
%% {31} &8& $(1,1,1,1,1,1,1,1,1,0,\ww,0,\vv,\vv,\vv,\vv,\vv,\vv  )$
%% &  9& 36&       2304\\ \hline
%%%%%%%%%
{38} &7& $(1,1,1,1,1,1,1,1,1,0,0,\vv,\ww,\vv,\ww,\ww,\ww,\ww  )$
&  0&108&      23328\\
     & & $(1,\ww,1,1,1,1,1,1,1,0,0,1,\vv,0,\vv,0,\vv,\vv      )$&&&\\ \hline
{40} &7& $(1,1,1,1,1,1,1,1,1,1,\ww,\ww,1,1,\ww,1,1,1          )$
&  0& 72&        216\\
     & & $(\ww,1,1,1,1,1,1,1,1,\vv,\ww,\ww,\vv,0,\ww,0,\ww,1  )$&&&\\ \hline
{83} &7& $(1,1,1,1,1,1,1,1,1,0,0,0,\vv,1,\ww,0,0,0            )$
&  9& 72&      62208\\
     & & $(\ww,1,1,1,1,1,1,1,1,0,\vv,\vv,\ww,0,\ww,\vv,1,\ww  )$&&&\\ \hline
{120}&7& $(1,1,1,1,1,1,1,1,1,1,0,0,\ww,\ww,\ww,1,1,1          )$
& 27& 18&     248832\\
     & & $(\ww,1,1,1,1,1,1,1,1,1,1,\ww,0,0,1,\vv,1,\ww        )$&&&\\ \hline
{147}&7& $(1,1,1,1,1,1,1,1,1,1,\vv,\vv,0,\ww,\vv,\ww,1,0      )$
& 45& 90&  $2^73^65^3$\\ 
     & & $(\ww,1,1,1,1,1,1,1,1,\vv,\vv,1,\vv,0,1,1,\vv,0      )$&&&\\ \hline
%%%%%%%%%
{190}&6& $(1,1,1,1,1,1,1,1,1,0,\vv,0,0,\vv,\vv,0,0,0          )$
&135& 54& $2^{10}3^{10}5^3$\\
     & & $(\ww,1,1,1,1,1,1,1,1,0,\vv,\vv,\vv,\vv,1,0,0,0      )$&&&\\
     & & $(1,\ww,1,1,1,1,1,1,1,0,1,0,0,\vv,\vv,1,0,\vv        )$&&&\\
%% \noalign{\hrule height0.8pt}
\hline
   \end{tabular}
}
\end{center}
\end{table}
%%%%%%%%%%%%%%%%%%%%%%%%%%%%%%%%%%%%%%%%%%%%%%%%

In the above classification,
we employed monomial matrices over $GF(4)$ in 
the definition for equivalence of codes.
To define a weaker equivalence,
one could consider a conjugation $\gamma$ of $GF(4)$ sending 
each element to its square in the  definition of equivalence,
that is,
two codes $C$ and $C'$ are {\em weakly} equivalent if there is a monomial
matrix $M$ over $GF(4)$ such that $C' =C M$
or $C'=CM\gamma$ (see \cite{Huffman}).

We have verified that
the equivalence classes 
of self-dual codes of lengths up to 16
are the same under both definitions.
For length $18$, there are $230$ classes under the weaker equivalence.
More specifically, the following codes are weakly equivalent:
%  in the weak sense:
\begin{align*}
&
(C_{  8}, C_{  9}),
(C_{ 10}, C_{ 11}),
(C_{ 19}, C_{ 20}),
(C_{ 24}, C_{ 25}),
(C_{ 26}, C_{ 27}),
\\&
(C_{ 28}, C_{ 29}),
(C_{ 30}, C_{ 31}),
(C_{ 50}, C_{ 51}),
(C_{ 56}, C_{ 57}),
(C_{ 73}, C_{ 74}),
\\&
(C_{ 89}, C_{ 90}),
(C_{ 92}, C_{ 93}),
(C_{ 94}, C_{ 95}),
(C_{113}, C_{114}),
(C_{118}, C_{119}).
\end{align*}

%%%%%%%%%%%%%%%%%
\section{A classification of generalized Hadamard matrices $H(6,3)$
based on codes}
\label{Sec:GH18}

Let $G=\langle \ww \rangle$ be the cyclic group of order 3 
being the multiplicative group of $GF(4)$.
Assume that $H(6,3)$ is a generalized Hadamard matrix
of order $18$ over $G$.
By Theorem \ref{thm:sd},
the code $C(H(6,3))$  generated by the rows of $H(6,3)$
is a Hermitian self-dual code over $GF(4)$
of length $18$ and minimum weight at least $4$.

Let $C$ be a Hermitian self-dual code of length $18$
and  minimum weight at least $4$.
We  define a simple undirected graph $\Gamma(C)$, 
whose set $V$ of vertices is the set of codewords
$x=(1,x_2,\ldots,x_{18})$ of weight $18$ in $C$,
with two vertices $x,y \in V$ being adjacent 
if $(n_1,n_{\ww},n_{\vv})=(6,6,6)$, where
$n_\alpha=\#\{i \mid x_i y_i^2=\alpha\}$.

The following statement was obtained by computations using {\sc Magma}.
\begin{lem}
Let $C$ be a Hermitian self-dual code of length $18$.
The graph $\Gamma(C)$ has a $18$-clique if and only if
$C$ is equivalent to one of the $13$ codes $C_{i}$
$(i=10,11,14,15,17,30,31,38,40,83,120,147,190)$.
\end{lem}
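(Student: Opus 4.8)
The plan is to reduce the statement to a finite search, carried out by \textsc{Magma}, and to explain why the two directions of the equivalence hold conceptually. For the ``only if'' direction, suppose $\Gamma(C)$ has an $18$-clique, say on vertices $x^{(1)},\dots,x^{(18)}$. First I would observe that each $x^{(k)}$ is a weight-$18$ codeword of $C$ whose first coordinate is $1$, and that, since $C$ is Hermitian self-dual of minimum weight at least $4$, the all-ones vector behaves well and each $x^{(k)}$ takes values in $G=\{1,\ww,\vv\}$. The clique condition $(n_1,n_\ww,n_\vv)=(6,6,6)$ for every pair $x^{(k)},x^{(l)}$ says exactly that the $18\times 18$ matrix $H$ with rows $x^{(1)},\dots,x^{(18)}$ satisfies: for all $k\neq l$, the multiset $\{x^{(k)}_i (x^{(l)}_i)^2 \mid 1\le i\le 18\}$ contains each element of $G$ six times. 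Interpreting $(x^{(l)}_i)^2=(x^{(l)}_i)^{-1}$ in the multiplicative group $G$, this is precisely the defining property of a generalized Hadamard matrix $H(6,3)$ over $G$. Hence $H$ is an $H(6,3)$ and, by Theorem~\ref{thm:sd}, its rows span a Hermitian self-dual code $C(H)$ of length $18$ and minimum weight at least $4$; since the rows of $H$ lie in $C$ and $\dim C(H)=9=\dim C$, we get $C(H)=C$, so $C$ arises from a generalized Hadamard matrix $H(6,3)$.

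Next I would invoke the enumeration: by Corollary~\ref{Cor:H63} there are exactly $85$ monomially inequivalent $H(6,3)$'s, and each determines a Hermitian self-dual $[18,9]$ code of minimum weight $\ge 4$; monomially equivalent (or inverse-related) $H(6,3)$'s give equivalent codes. Running through the $85$ representatives, I would compute the code each one generates and identify it among the $245$ classified codes $C_1,\dots,C_{245}$ of Theorem~\ref{thm:18}. The claim is that the codes so obtained are exactly the $13$ listed, namely $C_i$ for $i\in\{10,11,14,15,17,30,31,38,40,83,120,147,190\}$. This is a direct finite check in \textsc{Magma}; note the list is closed under the conjugation $\gamma$ (e.g.\ $(C_{10},C_{11})$, $(C_{30},C_{31})$ are weakly equivalent pairs), consistent with the $85=2\cdot 53 - 21$ bookkeeping and the inverse operation on $H(6,3)$.

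For the ``if'' direction, one simply reverses the correspondence: for each of the $13$ codes, I would exhibit an $18$-clique in $\Gamma(C_i)$ by taking the rows of a generalized Hadamard matrix $H(6,3)$ generating $C_i$ (normalizing each row so its first entry is $1$, which is permissible since scaling a row by an element of $G$ preserves both the $H(6,3)$ property and membership in $C_i$). Conversely, for each of the remaining $245-13=232$ codes, the finite search in $\Gamma(C)$ returns no $18$-clique; for the $55$ codes of minimum weight $2$ this is immediate since such codes contain no weight-$18$ vector supported on all coordinates with the required distribution (indeed, by \cite[Theorem~28]{MOSW} they decompose off a length-$2$ summand, forcing a zero in a fixed pair of coordinates of every codeword, so $V$ is empty). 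The bulk of the verification is the clique search on the minimum-weight-$4$ and $6$ codes.

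The main obstacle is the clique search itself: the vertex set $V$ of $\Gamma(C)$ can be large (the number of weight-$18$ codewords is $A_{18}=2808+22A_4+A_6$ up to the scaling by $|G|$, so several thousand vertices in the worst case), and deciding the existence of an $18$-clique is in principle expensive. In practice this is tractable because the graphs are highly structured and, when a clique exists, it is essentially the row set of an $H(6,3)$, so one can seed the search from known generalized Hadamard matrices; and when no clique exists the search terminates quickly owing to strong degree and neighbourhood constraints coming from the weight enumerator. I would rely on \textsc{Magma}'s clique routines together with the \textsc{Magma} verification already used for Theorem~\ref{thm:18}, so that the lemma is ultimately a certified finite computation layered on top of the completed classification of Hermitian self-dual $[18,9]$ codes.
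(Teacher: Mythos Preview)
Your proposal is sound and in the end subsumes what the paper actually does, which is nothing more than a direct \textsc{Magma} computation: for each of the $190$ Hermitian self-dual $[18,9]$ codes with $d\ge4$ found in Section~\ref{Sec:C}, build $\Gamma(C)$ and run a clique search. The paper offers no further argument for the lemma.

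Where you diverge is in invoking Corollary~\ref{Cor:H63} for the ``only if'' direction: you classify $H(6,3)$'s first, then compute which codes they generate. This is logically valid, but in the paper's architecture the lemma sits in Section~\ref{Sec:GH18}, whose entire purpose is to furnish an \emph{independent} second enumeration of $H(6,3)$'s via the code classification. Feeding Corollary~\ref{Cor:H63} (obtained from the symmetric-net enumeration of Section~\ref{nets}) into the proof of the lemma would make the two enumerations interdependent rather than mutually confirming. The paper therefore establishes the lemma by the bare clique search and only afterwards matches the resulting $H(6,3)$'s against those of Section~\ref{nets} (Table~\ref{Tab:GH}). Your observation that an $18$-clique forms an $H(6,3)$ whose row span must equal $C$ by a dimension count is exactly the conceptual bridge the paper uses around the lemma, so that part is on target.

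One genuine slip: your handling of the minimum-weight-$2$ codes is wrong. If $C=C_2\oplus C_{16}$, the $C_2$ summand has nonzero codewords $(\alpha,\alpha)$, so $C$ may well contain weight-$18$ vectors (whenever $C_{16}$ has full-weight codewords); the vertex set $V$ need not be empty. The correct way to dispose of these codes is the argument you already gave: an $18$-clique yields an $H(6,3)$ whose rows span a $9$-dimensional Hermitian self-dual subcode of $C$ with minimum weight $\ge4$ (Theorem~\ref{thm:sd}), forcing equality with $C$ and hence $d(C)\ge4$.
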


Note that
the eleven codes other than $C_{11}$, $C_{31}$ can be
found in Table \ref{Tab:Ex}, while 
the codes $C_{11}$ and $C_{31}$ are obtained as
$C_{10}\gamma$ and $C_{30}\gamma$, respectively.

The $18$-cliques in the graph $\Gamma(C)$
are 
generalized Hadamard matrices $H(6,3)$.
It is clear that $\Aut(C)$ acts on the graph $\Gamma(C)$
as a (not necessarily full) group of automorphisms.
If two $18$-cliques in $\Gamma(C)$ are in the
same $\Aut(C)$-orbit of the set of $18$-cliques in $\Gamma(C)$,
then the two generalized Hadamard matrices corresponding to
the two $18$-cliques are equivalent.
Hence, we found generalized Hadamard matrices corresponding to
representatives of $18$-cliques in $\Gamma(C)$ up to the action
of $\Aut(C)$. Then we verified whether two 
generalized Hadamard matrices are equivalent 
by a method  similar to that given in Section \ref{nets}.
For each code $C_i$, we list in Table \ref{Tab:GH}
the number $\#$ of generalized 
Hadamard matrices $H(6,3)$ which are not monomially equivalent,
obtained in this way.
In Table \ref{Tab:GH},
we also list corresponding
generalized Hadamard matrices given in Section \ref{nets}.
Therefore, we have an alternative classification of 
the generalized Hadamard matrices $H(6,3)$, 
given in Corollary \ref{Cor:H63}.

%%%%%%%%%%%%%%%%%%%%%%%%%%%%%%%%%%%%%%%%%%%%%%%%
\begin{table}[thb]
\caption{Generalized Hadamard matrices in $C_i$}
\label{Tab:GH}
\begin{center}
{\small
%{\footnotesize
%{\scriptsize
\begin{tabular}{|c|c|l|}
%% \noalign{\hrule height0.8pt}
\hline
$i$ & $\#$ & \multicolumn{1}{c|}{generalized  Hadamard matrices}  \\
\hline

 10 &  1 & $\overline{H(\cD_{45})}$ \\
 11 &  1 & $H(\cD_{45})$ \\
 14 &  4 & $H(\cD_{i}), \overline{H(\cD_{ i})}$ $(i=44, 53)$ \\
 15 &  3 & $H(\cD_{19})$, $H(\cD_{21}), \overline{H(\cD_{21})}$ \\
 17 &  8 & $H(\cD_{23})$, $H(\cD_{27})$, 
           $H(\cD_{i}), \overline{H(\cD_{ i})}$ $(i=24, 25, 26)$ \\
 30 &  2 & $H(\cD_{32})$, $\overline{H(\cD_{46})}$ \\
 31 &  2 & $H(\cD_{46})$, $\overline{H(\cD_{32})}$ \\
 38 &  9 & $H(\cD_{i})$ $(i=37,38,39)$, 
           $H(\cD_{j}), \overline{H(\cD_{ j})}$ $(j=34, 35, 36)$ \\
 40 &  3 & $H(\cD_{20})$, $H(\cD_{22}), \overline{H(\cD_{22})}$ \\
 83 & 12 & $H(\cD_{i})$ $(i=28, 33, 42, 43)$, 
           $H(\cD_{j}), \overline{H(\cD_{ j})}$ $(j=30, 31, 51, 52)$ \\
120 &  9 & $H(\cD_{i})$ $(i=1,2,3)$, 
           $H(\cD_{j}), \overline{H(\cD_{ j})}$ $(j=15, 16, 17)$ \\
147 & 14 & 
   $H(\cD_{i}), \overline{H(\cD_{ i})}$ $(i=29, 40, 41, 47, 48, 49, 50)$ \\
190 & 17 & $H(\cD_i)$ $(i=4, 5, 6, 12, 13, 14, 18)$, 
           $H(\cD_j), \overline{H(\cD_{ j})}$ $(j=7, 8, 9, 10, 11)$ \\
%% \noalign{\hrule height0.8pt}
\hline
   \end{tabular}
}
\end{center}
\end{table}
%%%%%%%%%%%%%%%%%%%%%%%%%%%%%%%%%%%%%%%%%%%%%%%%

\section{Acknowledgments}
The fourth co-author, Vladimir Tonchev, would like to thank 
Tohoku University, and Yamagata University for the hospitality
during his visit in June 2009.
The research of this co-author was partially supported by
NSA Grant H98230-10-1-0177.

%%%%%%%%%%%%%%%%%%%%%%%%%%%%%%%%%

\end{document}